\newcommand{\rrvert}{\vert}
\newcommand{\llvert}{\vert}
\newcommand{\fraca}[2]{{#1}/{#2}}
\newcommand{\frace}[2]{{#1}/{(#2)}}
\newtheorem{them}[defn]{Theorem}
\newtheorem{cor}[defn]{Corollary}
\newtheorem{prop}[defn]{Proposition}
\newtheorem{lma}[defn]{Lemma}
\renewcommand{\epsilon}{\varepsilon}
\newcommand{\R}{\mathbb{R}}
\newcommand{\E}{\mathbb{E}}
\renewcommand{\P}{\mathbb{P}}
\newcommand{\ud}{\,\mathrm{d}}
\begin{document}
\begin{frontmatter}

\title{Integral representation of random variables with respect to
Gaussian processes}
\runtitle{Integral representation}

\begin{aug}
\author{\inits{L.}\fnms{Lauri}~\snm{Viitasaari}\corref{}\ead[label=e1]{lauri.viitasaari@aalto.fi}}
\address{Department of Mathematics and Systems Analysis, Aalto
University School of Science, Helsinki,
P.O. Box 11100, FIN-00076 Aalto, Finland. \printead{e1}}
\end{aug}

\received{\smonth{3} \syear{2014}}
\revised{\smonth{6} \syear{2014}}

%
\begin{abstract}
It was shown in Mishura \textit{et al.}
(\textit{Stochastic Process. Appl.} \textbf{123} (2013) 2353--2369),
that any random variable can be represented as improper pathwise
integral with respect to fractional Brownian motion.
In this paper, we extend this result to cover a wide class of Gaussian
processes. In particular, we consider a wide class of processes that
are H\"{o}lder continuous of order $\alpha>1/2$ and show that only
local properties of the covariance function play role for such results.
\end{abstract}

%
\begin{keyword}
\kwd{F\"ollmer integral}
\kwd{Gaussian processes}
\kwd{generalised Lebesgue--Stieltjes integral}
\kwd{integral representation}
\end{keyword}
\end{frontmatter}

\section{Introduction}

In stochastic analysis and its applications such as financial
mathematics, it is an interesting question what kind of random
variables one can
replicate with stochastic integrals. In order to answer this question,
first one needs to consider in which sense the stochastic integral
exists. In particular, if the driving process $X$ is not a
semimartingale it is not clear how to define integrals with respect to
$X$ and what kind of integrands can be integrated with the given
definition of the integral.

The motivation for our work originates back to Dudley \cite{d} who
showed that any functional $\xi$ of a standard Brownian motion $W$ can
be replicated as an
It\^o integral $\int_0^1 \Psi(s)\ud W_s$, where $\Psi$ is an adapted
process satisfying $\int_0^1 \Psi^2(s)\ud s < \infty$ a.s. Moreover,
under additional assumption $\int_0^1 \E[\Psi^2(s)]\ud s < \infty$
one can cover only centered random variables with finite variance. On
the other hand, in this case the process $\Psi$ is unique.

Later on Mishura \textit{et al.} \cite{m-s-v} considered the same problem where
standard Brownian motion $W$ was replaced with fractional Brownian
motion (fBm) $B^H$ with Hurst index $H>\frac{1}{2}$. In this case the
authors considered generalised Lebesgue--Stieltjes integrals with
respect to fBm which can be defined, thanks to results of Azmoodeh
\textit{et al.} \cite{a-m-v}, for integrands of form $f(B^H_u)$ where $f$ is a
function of locally bounded variation. As an application of the results
in \cite{m-s-v},\vadjust{\goodbreak} the authors considered financial implications of the
results and gave a negative answer to the problem of zero integral;
does $\int_0^1 \psi(s) \ud B^H_s=0$ imply that $\psi(s)=0$. This
problem was open for fBm for some time, and in addition the result was
known only for Brownian motion.

It is interesting to note that while the stochastic integrals are
defined in different ways, the results for standard Brownian motion and
fBm are quite similar. On the other hand, the key idea to obtain
representation for arbitrary processes with integrals with respect to
some given process is to use idea of ``tracking'': first define a
sequence which obviously converges and then track that sequence. The
simplest way to do this is to define an integrand on a given time
interval which diverges in the limit and then use stopping times. This
idea was first used by Dudley \cite{d} for Brownian motion and then by
Mishura \textit{et al.} \cite{m-s-v} for fBm.

In this article, motivated by these two contributing works, we study
the problem for more general class of Gaussian processes.
In particular, we also consider generalised Lebesgue--Stieltjes
integrals and show that the brilliant construction introduced in \cite{m-s-v} for fBm applies, with small modifications, for more general
Gaussian processes. We also note that the integrals exist also as
forward integrals in the sense of F\"{o}llmer \cite{Follmer}. Our
class of Gaussian processes consists of wide class of processes which
has versions that are H\"{o}lder continuous of order $\alpha>\frac
{1}{2}$. More precisely, our class of processes consist of H\"older
continuous Gaussian processes $X$ which also satisfy several mild extra
conditions given for the corresponding covariance function $R$. In
particular, the class includes many stationary and stationary increment
processes that are H\"older continuous of sufficient order. In order to
obtain such result for general class of Gaussian processes, we show
that for the construction introduced in \cite{m-s-v} the only required
facts are local properties of the corresponding covariance function.
Moreover, we show that the replication can be done in arbitrary small
amount of time which has significant implications to the finance. As
such, this article is a hybrid of discussing review paper and an
original research article; We prove similar results as for fBm and use
the same idea of tracking so the proofs are quite similar with only
minor changes needed and no unnecessary complexity is added. On the
other hand, the results are extended to much wider class of processes,
the needed properties for such results are identified and it is also
shown that the replication can be done in any time interval. We also
discuss applications such as implications to finance and the problem of
zero integral. In particular, the results of this paper indicate that
with pathwise integrals the answer to the problem of zero integral is
usually false.

The rest of the paper is organised as follows. We start Section~\ref{sec:aux} by recalling the findings obtained in \cite{m-s-v} for fBm.
Moreover, we introduce the key properties of fBm under which the
authors in \cite{m-s-v} obtained their results. We end the Section~\ref{sec:aux} by introducing our notation and assumptions. We also
recall basic facts on generalised Lebesgue--Stieltjes integrals and F\"
{o}llmer integrals. In Section~\ref{sec:main}, we introduce and prove
the main results for our general class of processes. We end the paper
by discussion in Section~\ref{sec:app} where we shortly discuss
financial applications, uniqueness of the representation and the
problem of zero integral.

\section{Auxiliary facts}\label{sec:aux}

\subsection*{Key properties for fractional Brownian motion}

In \cite{m-s-v}, the authors proved the following:
\begin{itemize}
\item
For any distribution function $F$ there exists an adapted process $\Phi
$ such that $\int_0^1 \Phi(s)\ud B_s^H$ is well-defined (in the sense
of generalised Lebesgue--Stieltjes integral) and has distribution~$F$.
\item
Any measurable random variable $\xi$ can be represented as an improper
integral, that is, $\xi= \lim_{t\rightarrow1-}\int_0^t \Psi(s)\ud B^H_s$.
\item
A measurable random variable $\xi$ which is an end value of some H\"
{o}lder continuous process can be represented as a proper integral.
\end{itemize}
Our aim is to establish similar results for general class of Gaussian
processes. By studying the paper \cite{m-s-v}, one can see that in a
sense the following facts are the main ingredients for such results:
\begin{enumerate}[3.]
\item
It\^o's formula: for every locally bounded variation function $f$ we have
\[
F\bigl(B_T^H\bigr) = \int_0^T
f\bigl(B_u^H\bigr)\ud B^H_u,
\]
where $F(x) = \int_0^x f(y)\ud y$,
\item
fBm has stationary increments,
\item
a crossing bound at zero: there exists a constant $C$ such that for
every $0<s<t\leq T$ we have
\[
\P\bigl(B_s^H < 0 < B_t^H\bigr)
\leq C(t-s)^H t^{-H},
\]
\item
small ball probability: there exists a constant $C$ such that for every
$T$ and $\epsilon$ we have
\[
\P\Bigl(\sup_{0\leq t\leq T}\bigl |B_t^H\bigr | \leq\epsilon
\Bigr) \leq\exp \bigl(-CT\epsilon^{-\fraca{1}{H}} \bigr)
\]
provided that $\epsilon\leq T^H$.
\end{enumerate}
For our purposes, we have results similar to conditions 1 and 3 for
more general class of processes obtained by Sottinen and Viitasaari
\cite{s-v2}
(see subsection below). The conditions 2 and 4 we replace with
weaker assumptions on the covariance structure of the Gaussian process $X$.

\subsection*{Definitions and auxiliary results}

Throughout the paper, we are restricted on a bounded interval $[0,T]$
which is usually omitted in notation.

\begin{defn}
Let $X$ be a centered Gaussian process. We denote by $R_X(t,s)$,
$W_X(t,s)$, and $V_X(t)$ its covariance, incremental variance and
variance, that is,
\begin{eqnarray*}
R_X(t,s) &=& \E[X_tX_s],
\\
W_X(t,s) &=& \E\bigl[(X_t - X_s)^2
\bigr],
\\
V_X(t) &=& \E\bigl[X_t^2\bigr].
\end{eqnarray*}
We denote by $w^*_X(t)$ the ``worst case'' incremental variance
\[
w^*_X(t) = \sup_{0\le s\le T-t} W_X(s,s+t).
\]
\end{defn}

Let now $\alpha\in (\frac{1}{2},1 )$. We consider the
following class of processes.

\begin{defn}\label{defn:Xalpha}
A centered continuous Gaussian process $X=(X_t)_{t\in[0,T]}$ with
covariance $R_X$ belongs to the \emph{class} $\mathcal{X}^\alpha_T$
if there is a constant $\delta$ such that for every $u\in[T-\delta
,T)$ the process $Y_t = X_{t+u}-X_u$ for $t\in[0,T-u]$ satisfies:
\begin{enumerate}[3.]
\item
$R_Y(s,t)> 0$ for every $s,t>0$,
\item
the ``worst case'' incremental variance satisfies
\[
w^*_Y(t) = \sup_{0\le s\le T-t-u}W_Y(s,s+t)
\leq Ct^{2\alpha},
\]
where $C> 0$,
\item
there exist $c,\hat{\delta}>0$ such that
\[
V_Y(s) \geq cs^{2}
\]
provided $s\leq\hat{\delta}$,
\item
there exists a $\hat{\delta}>0$ such that
\[
\sup_{0< t<2\hat{\delta}}\sup_{\fraca{t}{2}\leq s\leq t}\frac
{R_Y(s,s)}{R_Y(t,s)}<
\infty.
\]
\end{enumerate}
\end{defn}

The class depends also on parameter $\delta$ which will be omitted on
the notation.

Note that the definition is quite technical. However, the conditions
are needed in order to have It\^o formula and crossing bound for
incremental process $Y$ close to time $T$. Moreover, the results for
fBm relies on the fact that $B^H$ has stationary increments. For our
class we simply need certain structure for covariance close to $T$. The
idea on the results is that before some point $t=T-\delta$ we simply
wait and do nothing. Moreover, the following remarks and examples show
that the assumptions are not very restrictive and are satisfied for
many Gaussian processes. For further discussion and details, see \cite{s-v2}
where the class was first introduced such that the covariance of
$X$ itself satisfy properties 1--4.\vadjust{\goodbreak}

\begin{rmk}
\begin{enumerate}[3.]
\item
Note that the first condition means that the increments of the process
are positively correlated close to time $T$. More precisely, we need
\[
R_X(t+u,s+u)+R_X(u,u) > R_X(t+u,u) +
R_X(u,s+u).
\]
In other words, the covariance function should have positive increments
on rectangles.
\item
The second condition implies that $Y$ has version which is H\"{o}lder
continuous of any order $a<\alpha$. For the rest of the paper, we
assume that this version is chosen.
\item
A special subclass of $\mathcal{X}^\alpha_T$ are processes with
stationary increments. In this case, we have
\begin{eqnarray*}
R_Y(t,s) &=& R_X(t,s)=\tfrac{1}{2}
\bigl[V(t)+V(s)-V(t-s) \bigr],
\\
W_Y(t,s) &=& W_X(t,s)=V_X(t-s),
\\
w^*_Y(t) &=& w^*_X(t)=V_X(t).
\end{eqnarray*}
Especially, stationary increment processes with $W_X(t,s)\sim
|t-s|^{2\alpha}$ at zero with $\alpha>\frac{1}{2}$ belong to
$\mathcal{X}^\alpha_T$ for every $T$. In particular, fBm with Hurst
index $H>\frac{1}{2}$ belongs to $\mathcal{X}^\alpha_T$.
\item
Another special subclass of $\mathcal{X}^\alpha_T$ are stationary
processes. In this case, we have
\begin{eqnarray*}
R_X(t,s) &=& r(t-s),
\\
W_X(t,s) &=& 2 \bigl[r(0)-r(t-s) \bigr],
\\
V_X(t) &=& r(0),
\\
w^*_X(t) &=& 2 \bigl[r(0)-r(t) \bigr]
\end{eqnarray*}
and
\begin{eqnarray*}
R_Y(t,s) &=& r(t-s)+r(0)-r(t)-r(s),
\\
W_Y(t,s) &=& W_X(t,s),
\\
V_Y(t) &=& W_X(t+u,u) = w^*_X(t),
\\
w^*_Y(t) &=& w^*_X(t).
\end{eqnarray*}
Consequently, for a stationary process $X$ with covariance function
$r(t)$ we have $X\in\mathcal{X}^\alpha_T$ if
$r(t)$ satisfies
\begin{eqnarray*}
r(t-s)+r(0) &>& r(t) + r(s),
\\
ct^2&\leq& r(0)-r(t) \leq Ct^{2\alpha}
\end{eqnarray*}
and
\[
\sup_{0< t<2\hat{\delta}}\sup_{\fraca{t}{2}\leq s\leq t} \frac
{r(0)-r(s)}{r(t-s)+r(0)-r(t)-r(s)} <
\infty.
\]
Especially, processes with strictly decreasing covariance at zero
satisfy assumptions 1 and~4. In particular, stationary
processes with strictly decreasing covariance and $W_X(t,s)\sim
|t-s|^{2\alpha}$ at zero with $\alpha>\frac{1}{2}$ belongs to
$\mathcal{X}^\alpha_T$ for every $T$. As an example, the process $X$
with covariance function
\[
r(t) = \exp \bigl(-|t|^{2\alpha} \bigr)
\]
with $\frac{1}{2}<\alpha<1$
belongs to $\mathcal{X}^\alpha_T$. We will use this process as a
motivating example throughout the paper, and we will denote this
process by $\tilde{X}$.
\end{enumerate}
\end{rmk}

The following statement derived in Sottinen and Viitasaari \cite{s-v2}
is one of the main ingredients for our study.

\begin{them}
\label{thm:ito}
Let $X\in\mathcal{X}^{\alpha}_T$ with $\alpha>\frac{1}{2}$ and let
$f$ be a function of locally bounded variation. Set $F(x) =\int_0^x
f(y)\ud y$. Then
%
\begin{equation}
\label{ito} F(X_T - X_u) = \int_u^T
f(X_s - X_u) \ud X_s
\end{equation}
provided $u\in[T-\delta,T)$, where the integral can be understood as
a generalised Lebesgue--Stieltes integral or as F\"{o}llmer integral.
\end{them}

\begin{rmk}
In the original paper \cite{s-v2}, the authors considered only convex
functions. However, by examining the proof it is evident that the
result holds also for functions of locally bounded variation.
\end{rmk}

Furthermore, we make the following assumption for small ball
probabilities. The examples are discussed in the next subsection.

\begin{Assumption}\label{assu:smallball}
There exist constants $C,\delta>0$ such that for every $s,t\in
[T-\delta,T]$ with $t=s+\Delta$ it holds
%
\begin{equation}
\label{smallball} \P\Bigl(\sup_{s\leq u \leq t} |X_u-
X_s| \leq\epsilon\Bigr) \leq \exp \bigl(-C\Delta\epsilon^{-\fraca{1}{\alpha}}
\bigr)
\end{equation}
provided that $\epsilon\leq\Delta^\alpha$.
\end{Assumption}

\subsection*{Which processes satisfy the Assumption \texorpdfstring{\protect\ref{assu:smallball}}{2.6}?}

In this subsection, we briefly review what kind of processes $X\in
\mathcal{X}^\alpha_T$ satisfy the Assumption~\ref{assu:smallball}.
In general, the small ball probabilities are an interesting subject of
study and a survey on small ball probabilities is given by Li and Shiao
\cite{l-s} where also the following theorem can be found.

\begin{them}
Let $\{X_t,t\in[0,1\}$ be a centered Gaussian process with $X_0=0$.
Assume that there is a function $\sigma^2(h)$ such that
\[
\forall0\leq s,t\leq1,\qquad \E(X_s-X_t)^2
\leq\sigma^2\bigl(|t-s|\bigr),
\]
and that there are $0<c_1\leq c_2<1$ such that $c_1\sigma(2h\wedge
1)\leq\sigma(h) \leq c_2 \sigma(2h\wedge1)$ for $0< h<1$. Then
there exists $K>0$ depending only on $c_1$ and $c_2$ such that
\[
\P \Bigl(\sup_{0\leq t\leq1}|X_t| \leq\sigma(\epsilon)
\Bigr) \geq\exp \biggl(-\frac{K}{\epsilon} \biggr).
\]
\end{them}

\begin{exm}
It is straightforward that fBm satisfies the assumptions for any $H\in(0,1)$.
\end{exm}

As a direct consequence, we obtain the following statement.

\begin{cor}
Let $X\in\mathcal{X}^\alpha_T$. Then for every $t\in[0,T]$ there
exist $\Delta>0$ and $K>0$ such that
\[
\P \Bigl(\sup_{s\leq u\leq t}|X_u-X_s| \leq
\epsilon \Bigr) \geq \exp \bigl(-K\Delta\epsilon^{-\fraca{1}{\alpha}} \bigr),
\]
provided that $|t-s|\leq\Delta$.
\end{cor}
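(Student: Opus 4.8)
The plan is to deduce the bound from the theorem of Li and Shiao recalled above by rescaling the increment process on $[s,t]$ to the unit interval and exhibiting an admissible majorant $\sigma$. The only property of the class $\mathcal{X}^\alpha_T$ that enters is the incremental-variance bound supplied by condition (2) of Definition \ref{defn:Xalpha}, namely $W_X(a,b)\le C|a-b|^{2\alpha}$ for the times $a,b$ involved; conditions (1), (3) and (4) play no role here.

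Fix $t$ and let $s<t$ with $t-s\le\Delta$, where $\Delta$ will be chosen small. First I would introduce the normalised, centred Gaussian process $Z_r:=X_{s+r(t-s)}-X_s$ for $r\in[0,1]$, which satisfies $Z_0=0$ and, by the incremental-variance bound, $\E[(Z_r-Z_{r'})^2]=W_X(s+r(t-s),s+r'(t-s))\le C(t-s)^{2\alpha}|r-r'|^{2\alpha}$. Hence $\E[(Z_r-Z_{r'})^2]\le\sigma^2(|r-r'|)$ with the power majorant $\sigma(h):=\sqrt{C}\,(t-s)^{\alpha}h^{\alpha}$.

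Next I would verify that $\sigma$ meets the hypotheses of the theorem. Being a power function, $\sigma(h)=2^{-\alpha}\sigma(2h)$ for $h\le\frac12$, so the comparison condition holds with $c_1=c_2=2^{-\alpha}$, and since $\alpha\in(\frac12,1)$ we indeed have $2^{-\alpha}\in(0,1)$; the only delicate point is the range of $h$ near $1$, which is immaterial because the small-ball estimate is used only for small arguments. Applying the theorem to $Z$ yields a constant $K_0>0$ depending only on $\alpha$ with $\P(\sup_{0\le r\le 1}|Z_r|\le\sigma(\epsilon'))\ge\exp(-K_0/\epsilon')$. Since $\sup_{0\le r\le1}|Z_r|=\sup_{s\le u\le t}|X_u-X_s|$, it remains to match levels: solving $\sigma(\epsilon')=\epsilon$ gives $\epsilon'=\epsilon^{1/\alpha}C^{-1/(2\alpha)}(t-s)^{-1}$, so $1/\epsilon'=C^{1/(2\alpha)}(t-s)\epsilon^{-1/\alpha}$ and therefore
\[
\P\Big(\sup_{s\le u\le t}|X_u-X_s|\le\epsilon\Big)\ge\exp\!\Big(-K_0C^{1/(2\alpha)}(t-s)\,\epsilon^{-1/\alpha}\Big).
\]
Using $t-s\le\Delta$ to make the (negative) exponent larger, the right-hand side is bounded below by $\exp(-K\Delta\epsilon^{-1/\alpha})$ with $K:=K_0C^{1/(2\alpha)}$, which is the claim.

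I expect the main obstacle to be bookkeeping rather than anything deep: checking that the comparison condition on $\sigma$ is genuinely satisfied in the form required by the theorem (the behaviour near $h=1$ and the admissible range of $\epsilon'$), and ensuring that the incremental-variance bound from condition (2) is available on the whole interval $[s,t]$ relevant to the fixed $t$, which is automatic for $t$ in the neighbourhood of $T$ where $\mathcal{X}^\alpha_T$ is defined. Once $\sigma$ is set up, the application of the theorem and the substitution $\sigma(\epsilon')=\epsilon$ are routine.
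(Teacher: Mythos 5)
Your proof is correct and takes essentially the same approach as the paper: the paper gives no written proof at all, presenting the corollary as a ``direct consequence'' of the quoted Li--Shao theorem, and your rescaling of the increment process to $[0,1]$ with the power majorant $\sigma(h)=\sqrt{C}\,(t-s)^{\alpha}h^{\alpha}$, followed by the substitution $\sigma(\epsilon')=\epsilon$, is precisely that deduction. The two caveats you flag are defects of the paper's own statements rather than gaps in your argument: the comparison condition fails near $h=1$ only because the paper quotes the theorem with $\sigma(2h\wedge 1)$ instead of the dyadic form of the original result (which a pure power function does satisfy), and condition (2) of the definition of the class $\mathcal{X}^\alpha_T$ indeed guarantees the needed incremental-variance bound only for $t$ in the neighbourhood of $T$, which is the only regime in which the corollary is actually used.
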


According to this corollary the bound given in
Assumption~\ref{assu:smallball} is the best possible in terms of $\Delta$ and
$\epsilon$. The upper bound is more difficult to obtain. Moreover, it
is pointed out in \cite{l-s} that the incremental variance is not an
appropriate tool for the upper bound. However, in many cases of
interest we can have the required upper bound. In particular, many
cases of interest have stationary increments or
are stationary processes. For processes with stationary increments, the
following theorem can be used to study the upper bound. For the proof,
we refer to \cite{k-l-s} where a slightly more general setup was considered.

\begin{them}
\label{thm:stat_inc_smallball}
Assume that the centered process $X$ has stationary increments and the
incremental variance $W(t,s) = W(0,t-s)$ satisfies:
\begin{enumerate}[2.]
\item
There exists $\theta\in(0,4)$ such that for every $x\in
[0,\frac{1}{2} ]$ we have
\[
W(0,2x) \leq\theta W(0,x).
\]
\item
For every $0<x<1$ and $2\leq j\leq\frac{1}{x}-2$, we have
%
\begin{eqnarray}
\label{hassu-konditio} %
&&6W(0,jx) + W\bigl(0,(j+2)x\bigr)+W\bigl(0,(j-2)x
\bigr)
\nonumber
\\[-8pt]
\\[-8pt]
&&\quad\geq4W\bigl(0,(j+1)x\bigr) + 4W\bigl(0,(j-1)x\bigr).
\nonumber
\end{eqnarray}
\end{enumerate}
Then there exists a constant $K>0$ such that for every $\epsilon\in
(0,1)$ we have
\[
\P \Bigl(\sup_{0\leq t\leq1}|X_t-X_0| \leq
\sqrt{W(0,\epsilon )} \Bigr) \leq\exp \biggl(-\frac{K}{\epsilon} \biggr).
\]
\end{them}

\begin{rmk}
In the original theorem, it was stated that instead of
(\ref{hassu-konditio}) it is also sufficient that the
incremental variance $W(t,s)$ is concave. Note that in our case usually
$W(0,t)\sim t^{2\alpha}$ with $\alpha>\frac{1}{2}$. Hence, $W(t,s)$
cannot be concave.
\end{rmk}

\begin{rmk}
We remark that the result holds also for stationary Gaussian processes.
\end{rmk}

\begin{cor}
Assume that $X\in\mathcal{X}^\alpha_T$ has stationary increments or
is stationary such that $W(0,t)\sim t^{2\alpha}$. Then
Assumption~\ref{assu:smallball} is satisfied.
\end{cor}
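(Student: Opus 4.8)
The plan is to derive Assumption \ref{assu:smallball} from the small ball upper bound of Theorem \ref{thm:stat_inc_smallball} by a rescaling argument that exploits the stationarity of the (increments of the) process. First I would fix a small $\Delta>0$ and, for $s<t$ in $[T-\delta,T]$ with $h:=t-s\le\Delta$, reduce to the unit interval. If $X$ has stationary increments (or is stationary), then $\{X_{s+v}-X_s\}_{v\ge0}\overset{d}{=}\{X_v-X_0\}_{v\ge0}$, whence
$$
\sup_{s\le u\le t}|X_u-X_s|\overset{d}{=}\sup_{0\le v\le h}|X_v-X_0|=\sup_{0\le r\le1}|Y_r|,\qquad Y_r:=X_{rh}-X_0 .
$$
The rescaled process $Y$ is again centered Gaussian with $Y_0=0$ and stationary increments, and its incremental variance is $\widetilde W(0,\tau)=W(0,\tau h)$.

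Next I would apply Theorem \ref{thm:stat_inc_smallball} to $Y$, which requires checking hypotheses (1) and (2) for $\widetilde W(0,\tau)=W(0,\tau h)$. Because $x\mapsto W(0,x)\sim x^{2\alpha}$ near the origin with $\tfrac12<\alpha<1$, the doubling condition $W(0,2\tau h)\le\theta W(0,\tau h)$ holds with a constant $\theta$ arbitrarily close to $2^{2\alpha}<4$, provided $\Delta$ is small enough that all arguments $\tau h\le h\le\Delta$ lie in the region where the asymptotics $W(0,x)\approx x^{2\alpha}$ are valid; and condition (\ref{hassu-konditio}) reduces, for the pure power $x^{2\alpha}$, to $6j^{2\alpha}+(j+2)^{2\alpha}+(j-2)^{2\alpha}\ge4(j+1)^{2\alpha}+4(j-1)^{2\alpha}$, which holds because the fourth difference of $x\mapsto x^{2\alpha}$ is nonnegative for $2\alpha\in(1,2)$. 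Theorem \ref{thm:stat_inc_smallball} then yields, for every $\eta\in(0,1)$,
$$
\P\Big(\sup_{0\le r\le1}|Y_r|\le\sqrt{W(0,\eta h)}\Big)\le\exp\big(-K/\eta\big).
$$

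Finally I would translate this into the variables of Assumption \ref{assu:smallball}. Setting $\epsilon=\sqrt{W(0,\eta h)}$ and inverting via $W(0,x)\asymp x^{2\alpha}$ gives $\eta\asymp\epsilon^{1/\alpha}/h$, so the exponent becomes $-K/\eta\asymp -K\,h\,\epsilon^{-1/\alpha}$, while the admissibility $\eta\le1$ translates to $\epsilon\le\sqrt{W(0,h)}\asymp h^{\alpha}$. Combining with the distributional identity of the first step yields
$$
\P\Big(\sup_{s\le u\le t}|X_u-X_s|\le\epsilon\Big)\le\exp\big(-C(t-s)\epsilon^{-1/\alpha}\big)
$$
for $\epsilon$ up to order $(t-s)^{\alpha}$, which is exactly the bound asserted in Assumption \ref{assu:smallball} with the length of the interval appearing in the exponent (for $h\asymp\Delta$ this is the stated form with $\Delta$ and $\epsilon\le\Delta^{\alpha}$, matching the fractional Brownian motion prototype in property (4) above).

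The main obstacle I anticipate is not the scaling but the uniform verification of hypotheses (1)--(2) of Theorem \ref{thm:stat_inc_smallball} over all $h\le\Delta$: one must upgrade the purely asymptotic relation $W(0,x)\sim x^{2\alpha}$ to the quantitative doubling and fourth-difference inequalities on the entire range of arguments $\tau h\le\Delta$. This is precisely what forces $\Delta$ to be chosen small and where the constants $C,\theta,K$ are pinned down; for a stationary $X$ the same analysis applies to $W(0,\tau)=2[r(0)-r(\tau)]$, using the remark that Theorem \ref{thm:stat_inc_smallball} also covers the stationary case.
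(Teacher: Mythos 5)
Your proposal is correct and follows essentially the same route as the paper: the paper's (very terse) proof likewise consists of verifying condition (\ref{hassu-konditio}) for the power function $x^{2\alpha}$ with $\alpha>\frac{1}{2}$ and invoking Theorem \ref{thm:stat_inc_smallball} together with the observation that $W(0,t-s)\sim C|t-s|^{2\alpha}$ once $\delta$ (hence the range of arguments) is small enough, the stationary case being covered by the accompanying remark. Your write-up merely makes explicit the steps the paper leaves implicit -- the rescaling to $[0,1]$, the doubling condition, and the inversion $\epsilon=\sqrt{W(0,\eta h)}$ yielding the exponent $-Ch\epsilon^{-1/\alpha}$ -- all of which are sound.
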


\begin{pf}
It is straightforward to see that a function $W(0,x)=x^{2\alpha}$
satisfies (\ref{hassu-konditio})
provided \mbox{$\alpha> \frac{1}{2}$}. It remains to note that with $\delta
$ small enough, we have $W(0,t-s) \sim C|t-s|^{2\alpha}$ provided
\mbox{$|t-s|\leq\Delta$}.
\end{pf}
%
\begin{exm}
As special examples we note that fBm $B^H$ with $H>\frac{1}{2}$ and
the process $\tilde{X}$ satisfy the Assumption~\ref{assu:smallball}.
\end{exm}

For general processes, $X\in\mathcal{X}^\alpha_T$ it is not clear
when Assumption~\ref{assu:smallball} is satisfied.
In principle, one can derive similar result as
Theorem~\ref{thm:stat_inc_smallball} under similar conditions.
However, in this case the incremental variance function $W(t+s,s)$
depends also on the starting point $s$.
Consequently, one needs to check the condition when $s$ is close to
$T$. Hence in this case, the structure of the covariance function is
more important.

\subsection*{Pathwise integrals}

In this section, we briefly introduce two kinds of pathwise integrals.

\subsubsection*{Generalized Lebesgue--Stieltjes Integral}

The generalized Lebesgue--Stieltjes integral is based on fractional
integration and fractional Besov spaces. For details on these topics,
we refer to \cite{s-k-m} and \cite{n-r}.

Recall first the definitions for fractional Besov norms and
Lebesgue--Liouville fractional integrals and derivatives.

\begin{defn}
Fix $ 0 <\beta< 1 $.
\begin{enumerate}[2.]
\item
The \emph{fractional Besov space} $W^{\beta}_1 = W^{\beta}_1
([0,T])$ is the space of real-valued measurable functions $ f \dvtx [0,T]
\to\mathbb{R}$ such that
\[
{\Vert f \Vert}_{1,\beta} = \sup_{0 \le s < t \le T} \biggl(
\frac
{|f(t) - f(s)|}{(t-s)^\beta} + \int_{s}^{t} \frac{|f(u) - f(s)
|}{(u-s)^ {1+\beta}}
\ud u \biggr) < \infty.
\]
\item
The \emph{fractional Besov space} $W^{\beta}_2 = W^{\beta}_2
([0,T])$ is the space of real-valued measurable functions $ f \dvtx [0,T]
\to\mathbb{R}$ such that
\[
{\Vert f \Vert}_{2,\beta} = \int_{0}^{T}
\frac{|f(s)|}{s^ \beta} \ud s + \int_{0}^{T}\int
_{0}^{s} \frac{|f(u) - f(s) |}{(u-s)^
{1+\beta}} \ud u \ud s < \infty.
\]
\end{enumerate}
\end{defn}

In this paper, we study the norm ${\Vert f \Vert}_{2,\beta}$ on
different intervals $[0,t]$. Hence we use short notation ${\Vert f
\Vert}_{t,\beta}$.

\begin{rmk}\label{r:rmk1}
Let $C^{\alpha}=C^{\alpha}([0,T])$ denote the space of H\"{o}lder
continuous functions of order $\alpha$ on $[0,T]$ and let $ 0<
\epsilon< \beta\wedge(1- \beta)$. Then
\[
C^{\beta+ \epsilon} \subset W^{\beta}_{1} \subset
C^{\beta-
\epsilon} \quad\mbox{and}\quad C^{\beta+ \epsilon} \subset
W^{\beta}_{2}.
\]
\end{rmk}

\begin{defn}
Let $t\in[0,T]$. The \emph{Riemann--Liouville fractional integrals}
$I^\beta_{0+}$ and $I^\beta_{t-}$ of order $\beta> 0$ on $[0,T]$ are
\begin{eqnarray*}
\bigl(I^\beta_{0+} f\bigr) (s) &=& \frac{1}{\Gamma(\beta)} \int
_0^s f(u) (s-u)^{\beta-1} \ud u,
\\
\bigl(I^\beta_{t-} f\bigr) (s) &=& \frac{\mathrm{e}^{\mathrm{i}\uppi\beta}}{\Gamma(\beta)} \int
_s^t f(u) (u-s)^{\beta-1} \ud u,
\end{eqnarray*}
where $\Gamma$ is the Gamma-function. The \emph{Riemann--Liouville
fractional derivatives} $D^{\beta}_{0+}$ and $D^{\beta}_{t-}$ are the
left-inverses of the corresponding integrals $I^\beta_{0+}$ and
$I^\beta_{t-}$. They can be also define via the \emph{Weyl
representation} as
\begin{eqnarray*}
\bigl(D^{\beta}_{0+} f\bigr) (s) &=& \frac{1}{\Gamma(1-\beta)} \biggl(
\frac{f(s)}{s^\beta} + \beta \int_{0}^{s}
\frac{f(s) - f(u)}{(s-u)^{\beta+ 1}} \ud u \biggr),
\\
\bigl(D^{\beta}_{t-} f\bigr) (s) &=& \frac{\mathrm{e}^{\mathrm{i}\uppi\beta}}{\Gamma(1-\beta)} \biggl(
\frac
{f(s)}{(t-s)^\beta} + \beta\int_{s}^{t}
\frac{f(s) -
f(u)}{(u-s)^{\beta+ 1}} \ud u \biggr)
\end{eqnarray*}
if $f\in I^\beta_{0+}(L^1)$ or $f\in I^\beta_{t-}(L^1)$, respectively.
\end{defn}

Denote $g_{t-}(s) = g(s)-g(t-)$.

The generalized Lebesgue--Stieltjes integral is defined in terms of
fractional derivative operators according to the next proposition.

\begin{prop}[(\cite{n-r})]\label{pr:n-r}
Let $0<\beta<1$ and let $f \in W^{\beta}_2$ and $g\in W^{1-\beta
}_1$. Then for any $t \in(0,T]$ the \emph{generalized
Lebesgue--Stieltjes integral} exists as the following Lebesgue integral
\[
\int_0^t f(s) \ud g(s) = \int
_{0}^{t} \bigl(D^{\beta}_{0+}
f_{0+}\bigr) (s) \bigl(D^{1- \beta}_{t-} g_{t-}
\bigr) (s) \ud s
\]
and is independent of $\beta$.
\end{prop}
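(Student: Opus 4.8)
The plan is to read the right-hand side not as an independent object but as the image, under fractional integration by parts, of the ordinary Stieltjes pairing, and then to verify the two assertions of the statement separately: that the displayed Lebesgue integral converges absolutely under the stated hypotheses, and that its value is independent of the auxiliary parameter $\beta$. The only analytic tools I need are the Weyl representations of $D^\beta_{0+}$ and $D^{1-\beta}_{t-}$ recorded above, the semigroup and inversion rules $I^\beta_{0+}I^\gamma_{0+}=I^{\beta+\gamma}_{0+}$, $I^\gamma_{t-}D^\gamma_{t-}=\mathrm{id}$ and $D^\beta_{0+}=I^\gamma_{0+}D^{\beta+\gamma}_{0+}$, and the fractional integration-by-parts identity $\int_0^t (I^\gamma_{0+}\phi)\,\psi\,\ud s=\int_0^t \phi\,(I^\gamma_{t-}\psi)\,\ud s$, which is a Fubini statement; the $(-1)^{\pm\beta}$ factors in the definitions are exactly those needed to make these inversion formulas hold without extra signs.

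First I would establish absolute convergence. Inserting the Weyl representation of $D^\beta_{0+}f_{0+}$ and integrating in $s$, Fubini's theorem bounds $\|D^\beta_{0+}f_{0+}\|_{L^1[0,t]}$ by a constant times the two terms defining $\|f\|_{t,\beta}$; the boundary correction from subtracting $f(0+)$ contributes only the finite quantity $|f(0+)|\,t^{1-\beta}/(1-\beta)$, so $D^\beta_{0+}f_{0+}\in L^1[0,t]$. For the second factor the hypothesis on $g$ must be read, via Remark \ref{r:rmk1}, as $g\in C^{1-\beta+\epsilon}\subset W^{1-\beta}_1$, so that $D^{1-\beta}_{t-}g_{t-}$ exists; writing $\lambda=1-\beta+\epsilon$, the two pieces of its Weyl representation are then dominated by $C(t-s)^{\lambda-(1-\beta)}$ and $C(t-s)^{\lambda-1+\beta}$, both bounded since $\lambda>1-\beta$, whence $D^{1-\beta}_{t-}g_{t-}\in L^\infty[0,t]$. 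An $L^1$–$L^\infty$ Hölder estimate now shows the integrand is integrable and the integral is finite.

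Next I would prove independence of $\beta$. Given $\beta<\beta'$, set $\gamma=\beta'-\beta$ and rewrite the two factors through the composition rules as $D^\beta_{0+}f_{0+}=I^\gamma_{0+}(D^{\beta'}_{0+}f_{0+})$ and $D^{1-\beta}_{t-}g_{t-}=D^\gamma_{t-}(D^{1-\beta'}_{t-}g_{t-})$, using $1-\beta=\gamma+(1-\beta')$. Substituting and applying the integration-by-parts identity with $\phi=D^{\beta'}_{0+}f_{0+}$ and $\psi=D^\gamma_{t-}(D^{1-\beta'}_{t-}g_{t-})$ transfers the operator $I^\gamma$ onto the second factor, where $I^\gamma_{t-}D^\gamma_{t-}=\mathrm{id}$ collapses it; this produces precisely the integral formed with $\beta'$ in place of $\beta$. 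Hence all choices $\beta\in(0,1)$ yield the same value, and the integral is well defined.

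The main obstacle is not any single estimate but the bookkeeping required to make every operator identity legitimate: the composition and inversion rules $D^{\beta+\gamma}=D^\gamma D^\beta$ and $I^\gamma D^\gamma=\mathrm{id}$ are valid only on the appropriate Liouville domains, and the Fubini step is justified only when the two factors lie in dual Lebesgue classes so that the resulting double integral converges absolutely. I would therefore carry the convergence estimates of the second paragraph through each intermediate expression, checking that $D^{\beta'}_{0+}f_{0+}\in L^1$ and that $D^\gamma_{t-}(D^{1-\beta'}_{t-}g_{t-})$ remains bounded, so that every interchange of the order of integration is licensed. The endpoint singularities at $s=0$ and $s=t$, tamed by the subtractions in $f_{0+}$ and $g_{t-}$, are exactly what the spaces $W^\beta_2$ and $C^{1-\beta+\epsilon}$ are designed to control.
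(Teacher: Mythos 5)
The paper does not prove this proposition at all: it is imported verbatim from \cite{n-r} as a known result, so there is no in-paper argument to compare against. Your proof is, in substance, the standard argument from that literature (the fractional integration-by-parts proof going back to Z\"ahle and used in \cite{n-r}), and it is sound. The two halves are exactly right: the $L^1$ bound $\Vert D^\beta_{0+}f_{0+}\Vert_{L^1[0,t]}\le C\bigl(\Vert f\Vert_{t,\beta}+|f(0+)|\,t^{1-\beta}\bigr)$ paired with the $L^\infty$ bound on $D^{1-\beta}_{t-}g_{t-}$ gives absolute convergence, and $\beta$-independence follows from the composition rules $D^\beta_{0+}f_{0+}=I^\gamma_{0+}\bigl(D^{\beta'}_{0+}f_{0+}\bigr)$, $D^{1-\beta}_{t-}g_{t-}=D^\gamma_{t-}\bigl(D^{1-\beta'}_{t-}g_{t-}\bigr)$ together with the Fubini--Tonelli identity $\int_0^t (I^\gamma_{0+}\phi)\,\psi\,\ud s=\int_0^t \phi\,(I^\gamma_{t-}\psi)\,\ud s$, which is legitimate here precisely because $\phi\in L^1$ and $\psi\in L^\infty$. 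Two remarks. First, you were right not to take the hypothesis ``$g$ continuous'' at face value: mere continuity does not make $D^{1-\beta}_{t-}g_{t-}$ exist, and the statement as printed in the paper is loose; the cited source assumes $g\in W^{1-\beta}_1$, for which H\"older continuity of order exceeding $1-\beta$ suffices, and this is exactly how the proposition is applied in the paper (with $g$ a H\"older path of $X$ of order $a<\alpha$ and $\beta>1-\alpha$), so your reading is the intended one rather than a weakening of the claim. Second, the independence assertion should be understood, as your argument in fact establishes, in the conditional sense: whenever both sides are defined for two exponents $\beta<\beta'$, they coincide; your closing caveat about verifying the Liouville-domain memberships ($f_{0+}\in I^{\beta'}_{0+}(L^1)$ and $g_{t-}\in I^{1-\beta}_{t-}(L^\infty)$) before invoking the composition and inversion rules is precisely the bookkeeping a complete write-up would need, and with the H\"older hypothesis on $g$ it goes through.
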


We will use the following estimate to prove the existence of F\"{o}llmer integrals.

\begin{them}[(\cite{n-r})]\label{t:n-r}
Let $ f \in W^{\beta}_2$ and $ g \in W^{1- \beta}_1$. Then we have
the bound
\[
\biggl\llvert \int_{0}^t f(s) \ud g(s) \biggr
\rrvert \le\sup_{0\le s < t \le T} \bigl | D^{1 - \beta}_{t-}
g_{t-}(s)\bigr  | {\Vert f \Vert}_{2,\beta}.
\]
\end{them}

\subsubsection*{F\"{o}llmer integral}

We also recall the definition of a forward-type Riemann--Stieltjes
integral due to F\"{o}llmer \cite{Follmer} (for English translation,
see \cite{Sondermann}).

\begin{defn}\label{defn:follmer-integral}
Let $(\pi_n)_{n=1}^{\infty}$ be a sequence of partitions
$\pi_n=\{0=t_0^n<\cdots<t_{k(n)}^n=T\}$ such that $|\pi_n|=\max_{j=1,\ldots,k(n)}|t_j^n-t_{j-1}^n|\rightarrow0$ as $n\to\infty$.
Let $X$ be a continuous process. The \emph{F\"{o}llmer integral along
the sequence} $(\pi_n)_{n=1}^{\infty}$ of $Y$ with respect to $X$ is
defined as
\[
\int_0^t Y_u \ud
X_u = \lim_{n\rightarrow\infty} \sum
_{t_j^n\in
\pi_n \cap(0,t]}Y_{t_{j-1}^n}(X_{t_j^n}-X_{t_{j-1}^n}),
\]
if the limit exists a.s.
\end{defn}

The F\"{o}llmer integral is a natural choice for applications such as
finance. However, usually it is difficult to prove the existence of the
F\"{o}llmer integral. For instance, for finite quadratic variation
processes the existence of the integral is a consequence of the It\^o's
formula. On the other hand, generalised Lebesgue--Stieltjes integrals
provides a tool to obtain the existence of F\"{o}llmer integral. For
instance, in \cite{s-v2} the authors proved first the existence of a
generalised Lebesgue--Stieltjes integral and then obtained the existence
of F\"{o}llmer integral by applying Theorem~\ref{t:n-r}.

\section{Main results}\label{sec:main}

We begin with the following technical lemma which gives the diverging
integrand. In our case, it can be defined similarly as for fBm. Hence,
we simply present the key points of the proof.

\begin{lma}\label{lma:aux}
Let $X\in\mathcal{X}^\alpha_T$ such that
Assumption~\ref{assu:smallball} is satisfied.
Then one can construct $\mathbb{F}$-adapted process $\phi_T$ on
$[0,T]$ such that the integral
\[
\int_0^s \phi_T(s)\ud
X_s
\]
exists for every $s<T$ and
%
\begin{equation}
\label{rep:aux-lemma} \lim_{s\rightarrow T-} \int_0^s
\phi_T(s)\ud X_s = \infty
\end{equation}
a.s.\vadjust{\goodbreak}
\end{lma}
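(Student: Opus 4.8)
The plan is to use a ``wait then act'' strategy: on $[0,T-\delta]$ I set $\phi_T\equiv 0$, and I concentrate all the action on a sequence of blocks accumulating at $T$. Fix an increasing sequence $T-\delta\le t_1<t_2<\cdots$ with $t_n\uparrow T$, and on each block $[t_n,t_{n+1})$ define
$$
\phi_T(u)=a_n\,\mathrm{sgn}\!\left(X_u-X_{t_n}\right),\qquad a_n=W_X(t_{n+1},t_n)^{-1/2},
$$
which is well defined and strictly positive because condition (3) of Definition \ref{defn:Xalpha} forces $W_X(t_{n+1},t_n)>0$. This $\phi_T$ is $\mathbb{F}$-adapted, since on each block it depends only on the increment of $X$ after $t_n$. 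Taking $f=\mathrm{sgn}$ and $F(x)=|x|$ in Theorem \ref{thm:ito} (the function $\mathrm{sgn}$ is of locally bounded variation, so the remark after that theorem applies), each completed block contributes
$$
\int_{t_n}^{t_{n+1}}\phi_T(u)\,\ud X_u=a_n\,\bigl|X_{t_{n+1}}-X_{t_n}\bigr|,
$$
applied with $t_{n+1}$ in the role of the terminal time and $t_n$ in the role of $u$; this is legitimate because the conditions defining $\mathcal{X}^\alpha_T$ hold uniformly for shifts $u\in[T-\delta,T)$ and each block lies in $[T-\delta,T)$. For fixed $s<T$ only finitely many blocks lie in $[0,s]$, so $\int_0^s\phi_T\,\ud X$ is a finite sum of such terms (plus one partial block), and hence exists.

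Writing $D_n=X_{t_{n+1}}-X_{t_n}$ and $\sigma_n^2=W_X(t_{n+1},t_n)$, the second step reduces \eqref{rep:aux-lemma} to a statement about a non-negative series. Every block contribution $a_n|D_n|$ is non-negative, and so is the partial last block, so for $s$ in the $N$-th block
$$
\int_0^s\phi_T(u)\,\ud X_u\ \ge\ S_{N}:=\sum_{n=1}^{N}a_n|D_n|=\sum_{n=1}^{N}\frac{|D_n|}{\sigma_n}.
$$
Since $N\to\infty$ as $s\to T-$, it suffices to prove $S_\infty=\sum_{n}\sigma_n^{-1}|D_n|=\infty$ almost surely. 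Each summand equals $|Z_n|$ with $Z_n=D_n/\sigma_n$ standard normal, so $\E S_N=N\sqrt{2/\pi}\to\infty$.

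To pass from divergence in mean to almost sure divergence I would first secure a positive probability of divergence by a second moment (Paley--Zygmund) argument: since $\operatorname{Var}(|Z_n|)=1-2/\pi$, the Cauchy--Schwarz bound on covariances gives $\operatorname{Var}(S_N)\le\bigl(\sum_{n\le N}\sqrt{1-2/\pi}\bigr)^2=N^2(1-2/\pi)$ irrespective of the correlations, whence $\E[S_N^2]/(\E S_N)^2$ stays bounded and $\P(S_\infty=\infty)>0$. To upgrade this to probability one I would apply the conditional (L\'evy) Borel--Cantelli lemma along $\mathcal{F}_{t_n}$ to the events $A_n=\{|D_n|\ge\sigma_n\}$: writing the conditional law of $D_n$ given $\mathcal{F}_{t_n}$ as $N(\mu_n,\tau_n^2)$, one has $\P(A_n\mid\mathcal{F}_{t_n})\ge\gamma>0$ as soon as the conditional (prediction-error) variance satisfies $\tau_n^2\ge c\,\sigma_n^2$, uniformly in $n$ and regardless of $\mu_n$. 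Then $\sum_n\P(A_n\mid\mathcal{F}_{t_n})=\infty$ forces $A_n$ to occur infinitely often, so $S_\infty=\infty$ almost surely.

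The main obstacle is exactly this lower bound $\tau_n^2=\operatorname{Var}(D_n\mid\mathcal{F}_{t_n})\ge c\,\sigma_n^2$ on the conditional variance of the increment given the whole past. This is the one place where the non-Markovian dependence of the Gaussian process really enters, and it is here that the structural hypotheses of Definition \ref{defn:Xalpha} — the positivity of the incremental covariance in condition (1) and the comparability in condition (4) — together with the small-ball bound of Assumption \ref{assu:smallball} must be used to prevent the past from predicting the increment $D_n$ too accurately. Equivalently, one must keep the projection $\operatorname{Var}(\E[D_n\mid\mathcal{F}_{t_n}])=\sigma_n^2-\tau_n^2$ bounded away from $\sigma_n^2$ using only the local behaviour of $R_X$ near $T$; everything else in the argument is robust and does not see the fine correlation structure.
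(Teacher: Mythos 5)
Your proof as written has a genuine gap, and it is exactly the one you flag yourself: the lower bound $\operatorname{Var}(D_n\mid\mathcal{F}_{t_n})\ge c\,\sigma_n^2$ on the prediction-error variance is never established, and it is neither an evident consequence of Definition \ref{defn:Xalpha} nor of Assumption \ref{assu:smallball}. All the hypotheses of the lemma constrain the covariance function and \emph{unconditional} laws: the small-ball bound controls $\P(\sup_{s\le u\le t}|X_u-X_s|\le\epsilon)$, not any conditional distribution given the whole past. Writing the conditional law of $D_n$ given $\mathcal{F}_{t_n}$ as $N(\mu_n,\tau_n^2)$, one has $\sigma_n^2=\operatorname{Var}(\mu_n)+\tau_n^2$, and nothing in the assumptions prevents $\tau_n^2/\sigma_n^2$ from degenerating, with the unconditional small-ball probability kept small by the spread of the random mean $\mu_n$ rather than by genuine conditional fluctuation. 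In that regime $\P(A_n\mid\mathcal{F}_{t_n})$ has no uniform lower bound, $\sum_n\P(A_n\mid\mathcal{F}_{t_n})$ may be finite on a set of positive probability, and the conditional Borel--Cantelli step collapses; the Paley--Zygmund step alone only gives $\P(S_\infty=\infty)>0$. So the argument is incomplete precisely where the non-Markovian dependence enters, which is the whole difficulty of the lemma.

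The paper's construction is designed to never face this question. Instead of constant coefficients on each block, it uses $\phi_T(s)=\sum_n f_\eta(X_s-X_{t_{n-1}})\mathbf{1}_{[t_{n-1},\tau_n)}(s)$ with $f_\eta(x)=(1+\eta)|x|^{\eta}\mathrm{sign}(x)$ and the \emph{stopping time} $\tau_n=\min\{t\ge t_{n-1}:|X_t-X_{t_{n-1}}|\ge n^{-\frac{1}{1+\eta}}\}\wedge t_n$. By Theorem \ref{thm:ito}, every block whose threshold is reached contributes the deterministic amount $\bigl(n^{-\frac{1}{1+\eta}}\bigr)^{1+\eta}=n^{-1}$, so divergence follows from the harmonic series once all but finitely many blocks complete. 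Failure of block $n$ to complete is the event $A_n=\{\sup_{t_{n-1}\le t\le t_n}|X_t-X_{t_{n-1}}|<n^{-\frac{1}{1+\eta}}\}$, whose \emph{unconditional} probability is summable by Assumption \ref{assu:smallball} with $\gamma\in(1,\frac{1}{\alpha})$ and $\eta<\frac{1}{\gamma\alpha}-1$; the easy direction of Borel--Cantelli, which needs no independence and no conditioning, then finishes the proof. Incidentally, your steps one and two could in principle be completed by a different route: $\{\sum_n|D_n|/\sigma_n<\infty\}$ is a measurable linear subspace for the Gaussian sequence $(D_n)$, so the Gaussian zero--one law gives it probability $0$ or $1$, and your Paley--Zygmund bound then forces probability $0$. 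But that is not the argument you propose, and as submitted, with the conditional Borel--Cantelli route resting on an unproven (and unavailable) conditional-variance estimate, the proof does not go through.
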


\begin{pf}
Fix numbers $\gamma\in (1,\frac{1}{\alpha} )$ and $\eta
\in (0,\frac{1}{\gamma\alpha}-1 )$. Furthermore,
set $t_0=0$ and $t_n= \sum_{k=1}^n \Delta_k$, $n\geq1$ where $\Delta
_n = \frac{Tn^{-\gamma}}{\sum_{k=1}^\infty k^{-\gamma}}$,
and define a function $f_{\eta}(x)=(1+\eta)|x|^{\eta}\operatorname{sign}(x)$.
Note that we can assume without loss of generality that
conditions of Definition~\ref{defn:Xalpha} hold in the whole interval. Otherwise set
$t_1=T-\delta$ and start after $t_1$. Finally, we set
\[
\tau_n = \min \bigl\{t\geq t_{n-1} \dvt |X_t -
X_{t_{n-1}}| \geq n^{-\frace{1}{1+\eta}} \bigr\} \wedge t_n
\]
and
\[
\phi_T(s) = \sum_{n=1}^\infty
f_\eta(X_s - X_{t_{n-1}})\mathbf {1}_{[t_{n-1},\tau_n)}(s).
\]
In order to complete the proof, we have to show that $\Vert \phi
_T\Vert _{s,\beta}<\infty$ a.s. for every $s<T$ and that
(\ref{rep:aux-lemma}) holds. The fact that $\Vert \phi_T\Vert _{s,\beta
}<\infty$ can be proved similarly as for fBm case in \cite{m-s-v}
together with Theorem~\ref{thm:ito}. Hence, it remains to show that
(\ref{rep:aux-lemma}) holds.

First by Theorem~\ref{thm:ito}, we get that for every
$s\in[t_{n-1},t_n)$
\[
\int_0^s \phi_T(u)\ud
X_u = \sum_{k=1}^{n-1}|X_{\tau
_k}-X_{t_{k-1}}|^{1+\eta}
+ |X_{s\wedge\tau_n} - X_{t_{n-1}}|^{1+\eta}.
\]
Now, as in the case of fBm, it is enough to show that only finite
numbers of events $A_n$ happen where $A_n$ is defined by
\[
A_n = \Bigl\{\sup_{t_{n-1}\leq t\leq t_n}|X_t -
X_{t_{n-1}}|< n^{-\frace{1}{\eta+1}} \Bigr\}.
\]
But now, by Assumption~\ref{assu:smallball}, we have
\[
\P(A_n) \leq \mathrm{e}^{-Cn^{-\gamma+ \frace{1}{\alpha(\eta+1)}}}
\]
for $n$ large enough. Noting our choices of $\gamma$ and $\eta$ we
obtain $\sum_{n\geq1}\P(A_n) < \infty$, and thus the result follows
from Borel--Cantelli lemma.
\end{pf}

\begin{rmk}
Same result can be obtained for integrals over any interval
$[s,t]\subset[T-\delta,T]$.
\end{rmk}

\begin{rmk}
It was remarked in paper by Mishura \textit{et al.} \cite{m-s-v} that for fBm
it is easy to see
that $\Vert \phi_T\Vert _{t,\beta}<\infty$ even for random times $t<T$. This
is indeed natural, since the It\^o's formula (\ref{ito}) holds also
for any bounded random time $\tau$ (see \cite{s-v2} for details).
\end{rmk}

\begin{rmk}
It was shown in \cite{a-m-v} that for fBm one can approximate the
integral of It\^o's
formula (\ref{ito}) with Riemann--Stieltjes sums along uniform
partition, i.e. the integral exists also as F\"{o}llmer integral.
Moreover, it was pointed out in \cite{s-v2} that this is true for more
general processes $X\in\mathcal{X}^\alpha_T$ and any partition.
Hence for any $n$, the integral
\[
\int_{t_{n-1}}^{t_n} f_{\eta}(X_s -
X_{t_{n-1}})\ud X_s
\]
exists also as F\"{o}llmer integral. Now by noting that
$\phi_T(s)$ is defined as a linear combination of functions of this
form it is evident that the integral
\[
\int_0^t \phi_T(s)\ud
X_s
\]
exists also as F\"{o}llmer integral for every $t<T$. The same
conclusion holds true also for other results presented in this paper.
\end{rmk}

As a direct corollaries, we obtain that integral with respect to $X_t$
can have any distribution and that any measurable random variable can
be represented as an improper integral; same results as for fBm. For
the sake of completeness, we present the results.

\begin{cor}
\label{cor:distribution_rep}
For any cdf $F$ one can construct adapted process $\psi_T(s)$ such
that $\int_0^T \psi_T(s) \ud X_s$ has distribution $F$.
\end{cor}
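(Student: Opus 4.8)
The plan is to split the construction into two steps: first manufacture, from the path of $X$ on a sub-interval near $T$, an $\mathbb{F}$-measurable random variable $\xi$ whose law is exactly $F$, and then \emph{aim} a divergent integral of the type produced in Lemma \ref{lma:aux} at the (now revealed) value $\xi$.

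Fix points $t_0<t^*$ in $(T-\delta,T)$ with $t^*-t_0\le\hat\delta$, and set $\psi_T\equiv 0$ on $[0,t^*]$. For the first step, take $\zeta:=X_{t^*}-X_{t_0}$; this is a centered Gaussian variable whose variance $W_X(t^*,t_0)\ge c(t^*-t_0)^2$ is strictly positive by condition $(3)$ of Definition \ref{defn:Xalpha}, so its distribution function $G$ is continuous and strictly increasing. Then $G(\zeta)$ is uniformly distributed on $(0,1)$ and, writing $F^{-1}$ for the generalised inverse of $F$, the variable $\xi:=F^{-1}(G(\zeta))$ is $\mathcal{F}_{t^*}$-measurable and has distribution $F$ by the quantile transform. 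For the second step I would invoke Lemma \ref{lma:aux}, in the sub-interval version pointed out in the remark following it, to produce an adapted $\phi$ such that $K_s:=\int_{t^*}^s\phi(u)\,\ud X_u$ is well defined for $s<T$ and $K_s\to+\infty$ almost surely as $s\to T-$. From the representation of this integral through Theorem \ref{thm:ito} (as in the proof of Lemma \ref{lma:aux}), $s\mapsto K_s$ is continuous with $K_{t^*}=0$, so $s\mapsto-K_s$ is likewise continuous, starts at $0$ and diverges to $-\infty$.

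I would then define the stopping time
\begin{equation*}
\sigma=\inf\{s\ge t^*:\ \text{sign}(\xi)\,K_s=\xi\}
\end{equation*}
and set $\psi_T=\text{sign}(\xi)\,\phi\,\mathbf{1}_{[t^*,\sigma)}$ on $[t^*,T]$. Because $\xi$ is $\mathcal{F}_{t^*}$-measurable and $(K_s)_{s\ge t^*}$ is adapted, both the sign and the level-crossing time $\sigma$ are determined by the available information, so $\psi_T$ is adapted. Since $\text{sign}(\xi)\,K_s$ starts from $0$ at $s=t^*$ and, as $s\to T-$, diverges to $+\infty$ when $\xi>0$ and to $-\infty$ when $\xi<0$, the target $\xi$ lies in its range; by the intermediate value theorem $\sigma<T$ almost surely, and continuity gives $\text{sign}(\xi)K_\sigma=\xi$, that is $\int_{t^*}^{\sigma}\text{sign}(\xi)\,\phi\,\ud X_u=\xi$. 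As $\psi_T$ vanishes off $[t^*,\sigma)$, we conclude $\int_0^T\psi_T(s)\,\ud X_s=\xi$, which has distribution $F$; the integral is really an integral up to the bounded random time $\sigma<T$, so it exists both as a generalised Lebesgue--Stieltjes integral and, by the same argument as in Lemma \ref{lma:aux}, as a F\"{o}llmer integral, using that $\|\phi\|_{t,\beta}<\infty$ even for random times $t<T$ (remark following Lemma \ref{lma:aux} and \cite{s-v2}).

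The main obstacle is to make the aiming simultaneously exact and adapted: one must check that $s\mapsto K_s$ is genuinely continuous, so that the first-passage time attains the level $\xi$ without overshooting, that $\sigma<T$ almost surely for \emph{random} targets of either sign, and that evaluating $\phi$ up to the random time $\sigma$ keeps the relevant norm finite so that the integral survives. The remaining ingredients—non-degeneracy of $\zeta$ and the correctness of the quantile transform for an arbitrary, possibly discontinuous, $F$—are routine.
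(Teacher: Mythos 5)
Your proposal is correct and takes essentially the same route as the paper: produce an $\mathcal{F}_{t^*}$-measurable random variable with law $F$ by a quantile transform of the Gaussian path (the paper uses $g(X_v)$ at a point $v$ of non-vanishing variance, you use the increment $X_{t^*}-X_{t_0}$, non-degenerate by condition $(3)$ of Definition \ref{defn:Xalpha}), then aim the divergent integral of Lemma \ref{lma:aux} at that target with a sign factor and a first-passage stopping time, exactly as in the paper's construction of $\tau$ and $\psi_T$. The differences (increment versus point value, and spelling out the quantile transform and intermediate value argument that the paper leaves implicit) are cosmetic.
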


\begin{pf}
The proof follows same arguments as for fBm in \cite{m-s-v} except
that since we do not know how the process $X$ behaves before some time
close to $T$, we have to choose some point $v<T$ such that $X_v$ has
non-vanishing variance. The rest follows with same arguments with
obvious changes.\vadjust{\goodbreak}
\end{pf}
%
\begin{rmk}
Note that the result remains true if replace the process $X$ with $Y =
h(X)$, where $h$ is
strictly monotone $C^1$ function. In this case the integrals of form
\[
\int_0^T \psi_T(s) \ud
Y_s
\]
are well defined by results in \cite{s-v2}. We remark that the result
is still valid even if the function $h$ is uniformly bounded.
\end{rmk}

\begin{them}
\label{thm:arb_rv-rep}
Let $(\Omega,\mathcal{F},\P)$ be a complete probability space with
left-continuous filtration $\mathbb{F}=\{\mathcal{F}_t\}_{t\in
[0,T]}$ and let $X\in\mathcal{X}^\alpha_T$ such that
Assumption~\ref{assu:smallball} is satisfied.
Then for any $\mathcal{F}_T$ measurable random variable $\xi$ one can
construct
$\mathbb{F}$-adapted process $\Psi_T$ on $[0,T]$ such that the integral
\[
\int_0^s \Psi_T(s)\ud
X_s
\]
exists for every $s<T$ and
\[
\lim_{s\rightarrow T-} \int_0^s
\Psi_T(s)\ud X_s = \xi
\]
a.s.
\end{them}
\begin{pf}
As in the proof of Lemma~\ref{lma:aux}, we can assume that assumptions
of Definition~\ref{defn:Xalpha} are satisfied for the whole interval.
Put first $Y_t = \tan\E[\operatorname{arctan}\xi|\mathcal{F}_t]$. Now
$Y_t$ is adapted, and we have
$Y_t \rightarrow\xi$ as $t\rightarrow T-$ a.s. by martingale
convergence theorem and left continuity of $\mathbb{F}$. Next for a
sequence $t_n$ increasing to $T$, set
$\delta_n = Y_{t_n} - Y_{t_{n-1}}$ and $\tau_n= \inf \{t \geq
t_n \dvt  Z_t^n = |\delta_n| \}$, where $Z_t^n = \int_{t_n}^t \phi
_{t_{n+1}}(s)\ud X_s$, and $\phi_{t_{n+1}}(s)$ is the process
constructed in Lemma~\ref{lma:aux} such that $Z_t^n \rightarrow\infty
$ as $t\rightarrow t_{n+1}$. By setting
\[
\Psi_T(s) = \sum_{n\geq1}
\phi_{t_{n+1}}(s)\mathbf{1}_{[t_n,\tau
_n]}(s)\operatorname{sign}(
\delta_n)
\]
we can repeat the arguments in \cite{m-s-v} to conclude that
\[
V_t := \lim_{t\rightarrow T-}\int_0^t
\Psi_T(s)\ud X_s = \xi.
\]
\upqed\end{pf}
%
\begin{rmk}
Consider an arbitrary $\mathbb{F}$ measurable process $Y_t$. If for
every $t\in(0,T]$ we have $X\in\mathcal{X}^\alpha_t$, then by
Theorem~\ref{thm:arb_rv-rep} we have that for
every $t$ there is a process $\Psi_t(u)$ such that the process
\[
V_t := \lim_{s\rightarrow t-}\int_0^s
\Psi_t(u)\ud X_u
\]
is a version of $Y_t$.
\end{rmk}

For the proof of our main theorem we also need a bound for the
probability that a Gaussian process $X$ crosses a zero level. The bound
is a consequence of the following more general result proved in \cite{s-v2}.

\begin{lma}
\label{lma:crossing2}
Let $X$ be a centered Gaussian process with strictly positive and
bounded covariance function $R$, $0 < s < t \le T$
and $ a \in\R$. Then there exists a universal constant $C=C(T)$ such that
\begin{eqnarray*}
&&{\P ( X_s < a < X_t )}
\\
&&\quad \le C \frac{\sqrt{W(t,s)}}{\sqrt{V(s)}} \biggl[1+\frac
{R(s,s)}{R(t,s)}+ \frac{|a|\mathrm{e}^{-\frace{a^2}{2V^*}}}{\sqrt{V(s)}}
\max \biggl(1,\frac{R(s,s)}{R(t,s)} \biggr) \biggr],
\end{eqnarray*}
where
\[
V^* = \sup_{s\leq T}V(s).
\]
\end{lma}

\begin{cor}
\label{lma:crossing}
Let $X$ be a centered Gaussian process with positive and bounded
covariance function $R(s,t)$, and let $0<s\leq t\leq T$ be fixed. Then
there exists a constant $C=C(T)$ such that
\[
\P(X_s < 0 < X_t) \leq C \sqrt{\frac{W(t,s)}{V(s)}}
\biggl[1+\frac
{R(s,s)}{R(t,s)} \biggr].
\]
\end{cor}

In \cite{m-s-v} the authors also studied when a random variable $\xi$
can be viewed as a proper integral, that is,
\[
\xi= \int_0^1 \Psi(s)\ud B^H_s
\]
for some process $\Psi(s)$. As a result it was shown in \cite{m-s-v}
that this is true if $\xi$ can be viewed as an endpoint of some
stochastic process which is H\"{o}lder continuous of some order $a>0$.
Moreover, under assumption that $\Psi$ is continuous the authors also
proved that the conditions are necessary. As the proof is based on
similar arguments as the proofs of previous theorems, it is not a
surprise that we can derive similar results for our general class of
processes. However, in our general case we have to modify the proof
accordingly by choosing parameters differently. Consequently, we can
only cover processes $\xi$ which are H\"{o}lder continuous of order
$a>1-\alpha$. For extensions, see Remark~\ref{rmk:H} below.

\begin{them}
\label{thm:H_rv-rep}
Let $X\in\mathcal{X}^\alpha_T$ such that Assumption~\ref{assu:smallball} is satisfied, and
let $\xi$ be $\mathcal{F}_T$ measurable random variable. If there
exists a H\"{o}lder continuous process $Z_s$ of order
$a>1-\alpha$ such that $Z_T = \xi$, then one can construct $\mathbb
{F}$-adapted process $\Psi_T$ on $[0,T]$ such that
the integral
\[
\int_0^T \Psi_T(s)\ud
X_s
\]
exists and
\[
\int_0^T \Psi_T(s)\ud
X_s = \xi
\]
a.s.
\end{them}

As in the proof of Lemma~\ref{lma:aux} and without loss of generality,
we assume that conditions of Definition~\ref{defn:Xalpha} are
satisfied for the whole interval. Otherwise we simply choose $t_1$
large enough such that we are close to $T$.
\begin{pf*}{Proof of Theorem~\ref{thm:H_rv-rep}}
Without loss of generality, we can assume $a<\alpha$. Let   $\beta
\in (1-\alpha,a\wedge\frac{1}{2} )$ and fix $\gamma>
\frac{1}{a-\beta}\vee1$. We
put $\Delta_n = \frac{Tn^{-\gamma}}{\sum_{k=1}^{\infty}k^{-\gamma
}}$ and set $t_0=0$, $t_n = \sum_{k=1}^{n-1}\Delta_k$, $n\geq2$.
Note that with our choice of $\gamma$ and $\beta$ we have $\gamma
(\alpha-\beta)-1> \gamma(\alpha-a)$. Hence, we can choose some
$\kappa\in(\gamma(\alpha- a), \gamma(\alpha-\beta)-1)$. Next, we
proceed as for fBm case and
divide the proof into three steps:
\begin{enumerate}[3.]
\item Set $\Psi_T(t) = 0$ on interval $[t_0,t_1]$. To proceed the
construction is done recursively on intervals $(t_n,t_{n+1}]$ and the
construction is divided into two steps depending on whether we have
$Y_{t_{n-1}} = Z_{t_{n-2}}$ (Case A) or $Y_{t_{n-1}} \neq Z_{t_{n-2}}$
(Case B).
For the sake of completeness and clearness, we present the steps.

Put $Y_t = \int_0^t \Psi_T(s)\ud X_s$ and assume that $\Psi_T(s)$ is
constructed on $[0,t_{n-1}]$ for some $n\geq2$. If we have Case A,
then we set
\[
\tau_n = \inf \bigl\{t\geq t_{n-1} \dvt
n^{\kappa}|X_t- X_{t_{n-1}}|= |Z_{t_{n-1}}-Z_{t_{n-2}}|
\bigr\} \wedge t_n
\]
and for $s\in[t_{n-1},t_n)$,
\[
\Psi_T(s) = n^{\kappa}\operatorname{sign}(X_s -
X_{t_{n-1}})\operatorname{sign}(Z_{t_{n-1}}-Z_{t_{n-2}})
\mathbf{1}_{[0,\tau_n]}(s).
\]
Now if $\tau_n < t_n$, we obtain by It\^o's formula (\ref{ito}) that
\[
Y_{t_n} = Z_{t_{n-1}}.
\]
Assume next that we have Case B. Then we proceed as in
Theorem~\ref{thm:arb_rv-rep} and set
\[
Y_t^n = \int_{t_{n-1}}^{t}
\phi_{t_n}(s)\ud X_s,
\]
where $\phi_{t_n}(s)$ is the process constructed in
Lemma~\ref{lma:aux} such that $Y_t^n\rightarrow\infty$ as $t\rightarrow t_n$,
\[
\tau_n = \inf \bigl\{t\geq t_{n-1} \dvt
Y_t^n= |Z_{t_{n-1}}-Y_{t_{n-1}}| \bigr\},
\]
and for $s\in[t_{n-1},t_n)$,
\[
\Psi_T(s) = \phi_{t_n}(s) \operatorname{sign}(Z_{t_{n-1}}-Y_{t_{n-1}})
\mathbf{1}_{[0,\tau_n]}(s).
\]
Then $Y_{t_n} = Z_{t_{n-1}}$.

\item
Next, note that for a fixed $n$, the only possibility that $Y_{t_n}
\neq Z_{t_{n-1}}$ is that we have case A and $\tau_n \geq t_n$. Hence,
it suffices to show that the event
\[
C_n = \Bigl\{\sup_{t_{n-1}\leq t\leq t_n}n^{\kappa}|X_t
- X_{t_{n-1}}|\leq|Z_{t_{n-1}}-Z_{t_{n-2}}| \Bigr\}
\]
happens only finite number of times. For this we take $b\in
(\alpha- \frac{\kappa}{\gamma}, a )$, and the arguments in
\cite{m-s-v} implies that it is sufficient to show that only finite
number of events
\[
D_n = \Bigl\{\sup_{t_{n-1}\leq t\leq t_n}n^{\kappa}|X_t
- X_{t_{n-1}}|\leq\Delta_n^b \Bigr\}
\]
happen. Recall that now we have $b>\alpha-\frac{\kappa}{\gamma}$
which can be written as $\gamma b +\kappa> \gamma\alpha$. Hence we
can apply the small ball estimate (\ref{smallball}) of
Assumption~\ref{assu:smallball} together with Borel--Cantelli lemma to obtain the result.

\item
To complete the proof, we have to show that $\Vert \Psi_T\Vert _{T,\beta} <
\infty$ a.s. For this, we go through the main steps which are
different from the case of fBm. We write
\[
A_n = \bigl\{\mbox{We have Case A on }(t_{n-1},t_n]
\bigr\},\qquad B_n = A_n^C,
\]
and
\begin{eqnarray*}
\Psi_T(s) &=& \sum_{n\geq2}
\Psi_T(s)\mathbf{1}_{(t_{n-1},t_n]}(s)\mathbf{1}_{A_n}
\\
&&{}+ \sum_{n\geq2}\Psi_T(s)
\mathbf{1}_{(t_{n-1},t_n]}(s)\mathbf{1}_{B_n}
\\
&=:& \Psi_T^A(s) + \Psi_T^B(s).
\end{eqnarray*}
As for fBm case, it is evident that $\Vert \Psi_T^B(s)\Vert _{T,\beta}<\infty$
since only finite numbers of events $B_n$ happen. Furthermore, we can write
\begin{eqnarray*}
\E\bigl[\bigl \Vert \Psi_T^A(s)\bigr \Vert _{T,\beta}\bigr]
&=& \int_0^T \frac{\E|\Psi
_T^A(s)|}{s^{\beta}}
\\
&&{}+ \sum_{n=2}^{\infty} \int
_{t_{n-1}}^{t_n} \int_0^{t_{n-1}}
\frac
{\E|\Psi_T^A(t) - \Psi_T^A(s)|}{(t-s)^{\beta+1}}\ud s \ud t
\\
&&{}+\sum_{n=2}^{\infty} \int
_{t_{n-1}}^{t_n} \int_{t_{n-1}}^t
\frac
{\E|\Psi_T^A(t) - \Psi_T^A(s)|}{(t-s)^{\beta+1}}\ud s \ud t
\\
&=:& I_1+ I_2 + I_3.
\end{eqnarray*}
The finiteness of $I_1$ and $I_2$ are easy to show and we omit the
details. For $I_3$ we set $\lambda_n(t)=\operatorname{sign}(X_t -
X_{t_{n-1}})$ and obtain
\begin{eqnarray*}
I_3 &=& \sum_{n=2}^{\infty} \int
_{t_{n-1}}^{t_n} \int_{t_{n-1}}^t
\frac{\E|\Psi_T^A(t) - \Psi_T^A(s)|}{(t-s)^{\beta
+1}}\ud s \ud t
\\
&=& \sum_{n=2}^{\infty} n^{\kappa}\int
_{t_{n-1}}^{t_n} \int_{t_{n-1}}^t
\frac{\E|\lambda_n(t)\mathbf{1}_{s\leq\tau_n}-\lambda
_n(s)\mathbf{1}_{s\leq\tau_n}|\mathbf{1}_{A_n}}{(t-s)^{\beta
+1}}\ud s\ud t
\\
&\leq&\sum_{n=2}^{\infty} n^{\kappa}\int
_{t_{n-1}}^{t_n} \int_{t_{n-1}}^t
\frac{\E [|\lambda_n(t)-\lambda_n(s)|+\mathbf
{1}_{s\leq\tau_n< t} ]}{(t-s)^{\beta+1}}\ud s\ud t.
\end{eqnarray*}
Now note that
\[
\bigl |\lambda_n(t)-\lambda_n(s)\bigr | = \mathbf{1}_{\{X_s-X_{t_{n-1}}\leq0
\leq X_t-X_{t_{n-1}}\}}
+ \mathbf{1}_{\{X_s-X_{t_{n-1}}\geq0 \geq
X_t-X_{t_{n-1}}\}},
\]
and by taking expectation together with symmetry it is sufficient to
consider probability
\[
\P(X_s-X_{t_{n-1}}\leq0 \leq X_t-X_{t_{n-1}}).
\]
Let us study the integral
\[
\int_{t_{n-1}}^{t_n} \int_{t_{n-1}}^t
\frac{\P(X_s-X_{t_{n-1}}\leq0
\leq X_t-X_{t_{n-1}})}{(t-s)^{\beta+1}}\ud s\ud t.
\]
By change of variable, we obtain that it is sufficient to study
\begin{eqnarray*}
&&\int_0^{t_n-t_{n-1}} \int_0^t
\frac{\P
(X_{s+t_{n-1}}-X_{t_{n-1}}\leq0 \leq
X_{t+t_{n-1}}-X_{t_{n-1}})}{(t-s)^{\beta+1}}\ud s\ud t
\\
&&\quad=\int_0^{t_n-t_{n-1}} \int_0^{\fraca{t}{2}}
\frac{\P
(X_{s+t_{n-1}}-X_{t_{n-1}}\leq0 \leq
X_{t+t_{n-1}}-X_{t_{n-1}})}{(t-s)^{\beta+1}}\ud s\ud t
\\
&&\qquad{}+\int_0^{t_n-t_{n-1}} \int
_{\fraca{t}{2}}^t\frac{\P
(X_{s+t_{n-1}}-X_{t_{n-1}}\leq0 \leq
X_{t+t_{n-1}}-X_{t_{n-1}})}{(t-s)^{\beta+1}}\ud s\ud t
\\
&&\quad=: J_1 + J_2.
\end{eqnarray*}
For $J_1$ we can bound the probability with one and get
\[
J_1 \leq C \Delta_n^{1-\beta}.
\]
Consider next the term $J_2$. By assumption 1 of
Definition~\ref{defn:Xalpha} the covariance of Gaussian processes
$X_{s+t_{n-1}}-X_{t_{n-1}}$ and $X_{t+t_{n-1}}-X_{t_{n-1}}$ is positive
for every $n$ and every $s,t\in[0,t_n-t_{n-1}]$.
Thus we can apply Corollary~\ref{lma:crossing} and assumption 4 to obtain
\[
\P(X_s-X_{t_{n-1}}\leq0 \leq X_t-X_{t_{n-1}})
\leq C\frac{\sqrt
{W_n(t,s)}}{\sqrt{\E(X_s - X_{t_{n-1}})^2}},
\]
where
\begin{eqnarray*}
W_n(t,s) &=& \E\bigl(X_{t+t_{n-1}} -X_{t_{n-1}} -
(X_{s+t_{n-1}} - X_{t_{n-1}})\bigr)^2
\\
&\leq& C(t-s)^{2\alpha},
\end{eqnarray*}
and
\[
\E(X_{s+t_{n-1}} - X_{t_{n-1}})^2 \geq Cs^2
\]
by assumptions. Hence, by symmetry of probabilities
$P(X_s-X_{t_{n-1}}\leq0 \leq X_t-X_{t_{n-1}})$ and
$P(X_s-X_{t_{n-1}}\geq0 \geq X_t-X_{t_{n-1}})$, we obtain
\begin{eqnarray*}
J_2 &\leq& C\int_0^{t_n-t_{n-1}} \int
_{\fraca{t}{2}}^t\frac
{(t-s)^{\alpha-\beta-1}}{s}\ud s\ud t
\\
&\leq& C\int_0^{t_n-t_{n-1}} t^{\alpha-\beta-1}\ud t
\\
&\leq& C \Delta_n^{\alpha-\beta}.
\end{eqnarray*}
To conclude, we note that
\[
\int_{t_{n-1}}^{t_n} \int_{t_{n-1}}^t
\frac{\mathbf{1}_{s\leq\tau
_n < t}}{(t-s)^{\beta+1}}\ud s \ud t \leq C\Delta_n^{1-\beta},
\]
and hence
\[
I_3 \leq C \sum_{n=2}^{\infty}
n^{\kappa- \gamma(\alpha-\beta)} < \infty
\]
by our choice of $\kappa$, $\gamma$, and $\beta$.\qed
\end{enumerate}
\noqed\end{pf*}
%
\begin{rmk}\label{rmk:H}
With our general assumptions, we can only cover H\"{o}lder continuous
variables of order $a>1-\alpha$.
However, under additional assumption that for $s$ close to $T$ and
small enough $\Delta$ the incremental variance satisfies
\[
\E[X_{s+\Delta} - X_s]^2 \geq C
\Delta^{2\theta}
\]
with some constant $C$ and some parameter $\theta\in(\alpha,1)$, we
can cover more. More precisely, we can cover H\"{o}lder continuous
processes of order $a>\theta- \alpha$. Especially this is the case if
the process $X$ is stationary or has stationary increments with
$W_X(0,t)\sim t^{2\alpha}$. In particular case of fBm one can cover
H\"{o}lder continuous processes of any order $a>0$. Similarly, with a
process $\tilde{X}$ one can cover H\"older continuous processes of any
order $a>0$.
\end{rmk}
%
\begin{rmk}
In \cite{m-s-v}, the authors proved also that under additional
assumption that $\Psi$ is continuous, the assumption of the Theorem~\ref{thm:H_rv-rep} is also necessary. The proof is based only to the
H\"older continuity of fBm and well-known properties of Young
integrals. Consequently, same conclusion remains for our general class
of processes.
\end{rmk}

\begin{cor}
Let $Z_t$ be a.s. H\"{o}lder continuous process of order $a>1-\alpha$
and for every $t\in(0,T]$ we have $X\in\mathcal{X}^\alpha_t$. Then
for every $t$ there
exists $\mathbb{F}$-adapted process $\Psi_t$ such that it holds, a.s.,
\[
\int_0^t \Psi_t(s)\ud
X_s = Z_t,
\]
i.e. the integral $\int_0^t \Psi_t(s)\ud X_s$ is a version of $Z_t$.
\end{cor}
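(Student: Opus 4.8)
The plan is to apply Theorem \ref{thm:H_rv-rep} separately for each fixed terminal time $t$. First I would fix $t\in(0,T]$ and regard $t$ as playing the role of $T$ in that theorem. By hypothesis $X\in\mathcal{X}^\alpha_t$, so the structural conditions of Definition \ref{defn:Xalpha} hold on $[0,t]$; I would also note that Assumption \ref{assu:smallball}, being a purely local statement near the right endpoint, transfers to the interval $[0,t]$. This is the only point at which one must check that the standing hypotheses carry over, and it is immediate since the small ball bound \eqref{smallball} is required only for $s,t$ close to the terminal time, with $\Delta$ chosen accordingly.

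Next, I would observe that the random variable $\xi := Z_t$ is $\mathcal{F}_t$-measurable and is precisely the endpoint at time $t$ of the process $(Z_s)_{s\in[0,t]}$, which by assumption is almost surely H\"{o}lder continuous of order $a>1-\alpha$. Thus all hypotheses of Theorem \ref{thm:H_rv-rep}, with $T$ replaced by $t$, are met, and the theorem directly yields an $\mathbb{F}$-adapted process $\Psi_t$ on $[0,t]$ for which the integral $\int_0^t\Psi_t(s)\,\ud X_s$ exists and equals $Z_t$ almost surely.

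Finally, since $t\in(0,T]$ was arbitrary, repeating the construction for every $t$ produces a family $(\Psi_t)_{t\in(0,T]}$ such that $\int_0^t\Psi_t(s)\,\ud X_s=Z_t$ almost surely for each $t$; hence the map $t\mapsto\int_0^t\Psi_t(s)\,\ud X_s$ is a version of $Z_t$, exactly as in the analogous remark following Theorem \ref{thm:arb_rv-rep}. The argument introduces no new estimates, as the entire analytic work is already contained in Theorem \ref{thm:H_rv-rep}. I expect the only steps requiring genuine care to be confirming that the H\"{o}lder exponent condition $a>1-\alpha$ is uniform in $t$, which is clear because $a$ and $\alpha$ do not depend on $t$, and that Assumption \ref{assu:smallball} indeed holds on each subinterval; both are routine given the localisation built into Definition \ref{defn:Xalpha} and Assumption \ref{assu:smallball}.
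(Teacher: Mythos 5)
Your overall route is exactly the paper's: the corollary is stated there without proof, being an immediate application of Theorem \ref{thm:H_rv-rep} with the terminal time $T$ replaced by $t$, in the same spirit as the remark following Theorem \ref{thm:arb_rv-rep}. Fixing $t$, noting that $\xi := Z_t$ is $\mathcal{F}_t$-measurable and is the endpoint of the H\"{o}lder continuous process $(Z_s)_{s\in[0,t]}$ of order $a>1-\alpha$, and invoking the theorem on $[0,t]$ is all that is intended; your first and second paragraphs capture this correctly.

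The one step where your justification is wrong, rather than merely terse, is the claim that Assumption \ref{assu:smallball} ``transfers'' to $[0,t]$ automatically. That assumption is local at $T$: it provides the bound (\ref{smallball}) only for points $s,u\in[T-\delta,T]$. When Theorem \ref{thm:H_rv-rep} is run with terminal time $t$, its construction (via Lemma \ref{lma:aux} and the Borel--Cantelli argument) needs small-ball upper bounds over intervals accumulating at $t$; if $t<T-\delta$ these intervals are disjoint from $[T-\delta,T]$, and the original assumption says nothing whatsoever about them. The hypothesis actually needed is the analogue of Assumption \ref{assu:smallball} with $T$ replaced by $t$, for each $t\in(0,T]$ --- an additional requirement, exactly parallel to the way the corollary strengthens $X\in\mathcal{X}^\alpha_T$ to $X\in\mathcal{X}^\alpha_t$ for every $t$. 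In fairness, the paper's own statement of the corollary is silent on this point (it, too, lists only the condition $X\in\mathcal{X}^\alpha_t$), so you have reproduced an implicit hypothesis of the paper rather than introduced a new error; but your assertion that the transfer is ``immediate'' is incorrect, and a careful write-up should state the small-ball condition at each terminal point $t$ as part of the hypotheses.
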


\section{Applications and discussions}
\label{sec:app}

In the paper \cite{m-s-v}, the authors considered financial
implications of their results to a model where the stock is driven by
geometric fBm. In particular, the results indicate one more reason why
geometric fBm is not a proper model in finance.
Evidently, we could state similar results in our general setting and as
a consequence, we can argue that processes $X\in\mathcal{X}^\alpha
_T$ do not fit
well as the driving process of stock prices. This is also discussed
with details in \cite{s-v2} where the authors proved the pathwise It\^
o--Tanaka formula for processes in our class. For further details, we
refer to \cite{m-s-v} and \cite{s-v2}, and the repetition of the
arguments presented in \cite{m-s-v} for more general processes $X\in
\mathcal{X}^\alpha_T$ are left to the reader. However, we wish to
give one remark on financial implications of our results. In \cite{m-s-v}, the authors proved that if the stock is driven by geometric
fractional Brownian motion, then one can replicate essentially all
interesting derivatives. On the other hand, we can never know whether
the process driving the stock is geometric fBm or not. The benefit of
our results is that in addition to the fact that the replication can be
done with much more general class of processes, the replication can be
done also in arbitrary small amount of time. This means that one can
wait and observe the process up to some time arbitrary close to the
maturity, and start the replication procedure after that point.
Especially this is useful if there is no information on the stock
dynamics. Assuming that the driving process is Gaussian, one can save
time to estimate the covariance structure of the process and use this
information for the replication.

\subsection*{On the uniqueness of representation}

In the case of standard Brownian motion, every centered random variable
$\xi$ with finite variance can be represented as
\[
\xi= \int_0^1 \Psi(s)\ud W_s,
\]
where $\int_0^1 \E[\Psi(s)]^2\ud s<\infty$.
Moreover, a direct consequence of the It\^o isometry implies that in
this case the process $\Psi$ is unique. However, for generalised
Lebesgue--Stieltjes integrals the representation is not unique. As an
example, consider fractional Ornstein--Uhlenbeck process given by
\[
U_t^\theta= \int_0^t
\mathrm{e}^{-\theta(t-s)}\ud B^H_s.
\]
On the other hand, by Theorem~\ref{thm:H_rv-rep} we know that
\[
U_t^\theta= \int_0^t
\Psi_t(s)\ud B^H_s,
\]
where $\Psi_t(s)$ is defined equally zero on interval $[0,t_1]$, and
$t_1$ can be chosen arbitrary close to $1$. Hence, the representation
is clearly not unique in general with pathwise integrals. On the other
hand, for Skorokhod integrals with respect to fBm the representation is
unique (see \cite{bender}).

\subsection*{The problem of zero integral}

Another application which was considered in \cite{m-s-v} for fBm was
the problem of zero integral, and we wish to end the paper by giving
some remarks on zero integral problem for our general class of processes.

Recall that the zero integral problem refers to the question whether we
have implication
%
\begin{equation}
\label{zero_integral_conc} \int_0^1 u_s \ud
X_s = 0, \qquad\mbox{a.s.}\quad\Rightarrow\quad u_s = 0,
\qquad \P\otimes\operatorname{Leb}\bigl([0,T]\bigr) \qquad\mbox{a.e.}
\end{equation}
For standard Brownian motion this is true under assumption $\int_0^T
\E[u_s^2]\ud s < \infty$, and the result is a direct consequence of
the It\^o isometry. On the other hand, if we only have that
$\int_0^1 u_s^2 \ud s < \infty$ a.s., then the conclusion is false.
In particular, one can construct an adapted process such that $\int_0^{\fraca{1}{2}} u_s\ud W_s = 1$ and $\int_{\fraca{1}{2}}^1 u_s\ud
W_s = -1$.

Similarly for fBm, the authors in \cite{m-s-v} explained that one can
construct an adapted process such that $\int_0^{\fraca{1}{2}} u_s\ud
B^H_s = 1$ and $\int_{\fraca{1}{2}}^1 u_s\ud B^H_s = -1$. Now the
results presented in this paper indicate that the same conclusion
remains true if we replace fBm $B^H$ with more general Gaussian process
$X$. This suggests that the problem of zero integral is not interesting
in the first place since the conclusion is false in most of the
interesting case unless one poses some extra assumptions. We also note
that a negative answer to the question of zero integral is a direct
consequence of the fact that the representation is not unique. As
another example of this, consider a random variable $(X_1 - K)^+$.
Clearly this random variable is an end value of H\"{o}lder continuous
process, and thus Theorem~\ref{thm:H_rv-rep} implies that there is a
process $\Psi_1(s)$ such that
\[
(X_1 - K)^+ = \int_0^1
\Psi_1(s)\ud X_s.
\]
Moreover, by construction of the process $\Psi_1(s)$ we have
$\Psi_1(s) = 0$ on the interval $s\in[0,t_1]$. On the other hand, by
Theorem~\ref{thm:H_rv-rep} (assuming that the covariance $R_X$ of the
process $X$ itself satisfies 1--4) we have
\[
(X_1 - K)^+ = (X_0 - K)^+ +\int_0^1
\mathbf{1}_{X_s>K}\ud X_s.
\]
If now $X_0\leq K$ a.s., subtracting first equation from the second
one, we obtain that
\[
0 = \int_0^1 \Psi_1(s) -
\mathbf{1}_{X_s > K}\ud X_s.
\]
Now $\Psi_1(s)=0$ a.s. on $[0,t_1]$, and clearly the same is not true for
process $\mathbf{1}_{X_s>K}$. This is another argument to show that
the $\int_0^1 u_s\ud X_s=0$ does not imply $u_s=0$ a.s. in general.


\section*{Acknowledgement}

Lauri Viitasaari thanks The Finnish Graduate School in Stochastic and
Statistics for financial support.


%

\printhistory
\end{document}